\journalname{JOTA}
\newcommand{\RR}{\mathbb{R}}
\newcommand{\prox}{{\mathbf{prox}}}
\newcommand{\cF}{{\mathcal{F}}}
\newcommand{\cS}{{\mathcal{S}}}
\begin{document}

\title{A New Exact Worst-Case Linear Convergence Rate of the Proximal Gradient Method}


\author{Xiaoya Zhang   \and Hui Zhang  
}


\institute{Xiaoya Zhang,  Hui Zhang \at
              Department of Mathematics, National University of Defense Technology, Changsha, 410073, Hunan, China. \\
              \email{zhangxiaoya09@nudt.edu.cn, h.zhang1984@163.com.}           
             }

\date{Received: date / Accepted: date}

\maketitle

\begin{abstract}
In this note,  we establish a new exact worst-case linear convergence rate of the proximal gradient method in terms of the proximal gradient norm,  which complements the recent results in \cite{taylor2018exact} and implies a refined descent lemma.  Based on the new lemma, we improve 
  the linear convergence rate of the objective function accuracy under the Polyak-{\L}ojasiewicz inequality.
\end{abstract}
\keywords{linear convergence   \and proximal gradient method  \and  strongly convex \and Polyak-{\L}ojasiewicz inequality}
\subclass{90C25 \and 90C22 \and 90C20}

\section{Introduction}
A well-known algorithm for minimizing  the sum of a smooth function with a non-smooth convex one is  the proximal gradient (PG) method.  Recently, the authors of \cite{taylor2018exact} studied the exact worst-case linear convergence rates of the PG method for three different standard performance measures: objective function accuracy, distance to optimality and residual gradient norm. However, the first and third measures rely on the minimizers and the optimal value, which are in general unknown; while the second measure is usually difficult to compute. 
On the other hand, the  proximal gradient (also called stepsize in \cite{drusvyatskiy2016nonsmooth}) norm is suggested as a more appealing stopping criteria in \cite{drusvyatskiy2016nonsmooth}.
This motivates us to consider the proximal gradient norm as an alternative to the existing three performance measures. 

As a result, we derive an exact worst-case linear convergence rate for the PG method in terms of the proximal gradient norm. The proof idea shares the same spirit of Theorem 2 in  \cite{nutini2018active} but is quite different from that in \cite{taylor2018exact}.  Our result not only complements the recent results in \cite{taylor2018exact}, but also helps us refine the classic descent lemma for the PG method and further yields an improved linear convergence rate of the objective function accuracy for non-strongly convex case. 



\section{Notations and preliminaries}
\subsection{Notations and definitions}
Throughout the paper, $\RR^n$ will denote an $n$-dimensional Euclidean space associated with inner-product $\langle \cdot, \cdot \rangle$ and induced norm $\| \cdot \|$. For any nonempty $S \subset \RR^n$, we define the distance function by $d(x, S) := \inf_{y \in S} \|x - y\|$. Besides, we define the indicator function of a set $C \subset \RR^n$ as
$$ \iota_C: C \rightarrow [-\infty, +\infty]: x \rightarrow  \left \{
\begin{aligned} 
0, \qquad & x \in C; \\
+\infty, \qquad & otherwise.
\end{aligned}
\right.$$
Recall some basic notions,  the domain of the function $f : \mathbb{R}^d \rightarrow (-\infty,+\infty]$ is defined by $\text{dom}~f = \{x \in \mathbb{R}^d: f(x)< +\infty\}$. We say that $f$ is proper if $\text{dom}~f \neq \emptyset$. 

The $L$-smoothness and $\mu$-strongly convexity are defined as:
\begin{description}
\item [$\Box$] \emph{ $L$-smoothness:} $\forall x \in \RR^n, \| \nabla f(x) - \nabla f(y)\| \le L \|x-y\|$ holds.
\item [$\Box$] \emph{$\mu$-strong convexity:}  $ f(x) - \frac{\mu}{2} \|x\|^2$  is convex on $\RR^n$.
\end{description}

For simplicity, we make the following notations:
\begin{description}
\item[$\bullet$] $\cF_L^{1,1}(\RR^n)$:  the class of $L$-smooth convex functions from $\RR^n$ to $\RR$;
\item[$\bullet$]  $\cS_{\mu,L}^{1,1}(\RR^n)$:  the class of $L$-smooth and $\mu$-strongly convex functions from $\RR^n$ to $\RR$;
\item[$\bullet$]  $\Gamma_0(\RR^n)$: the class of proper closed and convex functions from $\RR^n$ to $(-\infty, +\infty]$.
\end{description}
Obviously, we have $ \cS_{\mu,L}^{1,1}(\RR^n) \subseteq  \cF_L^{1,1}(\RR^n)$. 

\subsection{The proximal gradient algorithm}
In this note, we consider  the composite convex minimization:
\begin{align}
\min_{x \in \RR^n} \{ \varphi (x) : = f(x) + g(x) \} \label{Eq:P}
\end{align}
where $f \in \cF_L^{1,1}(\RR^n)$ and $g \in \Gamma_0(\RR^n)$. 

We focus on the PG method with constant step size $t$ to solve (\ref{Eq:P}).
For simplicity, we use the superscript "+" to denote the subsequent iterate. The PG method can be simply expressed by
$$x^+=\prox_{tg}(x-t\nabla f(x))=x-t\cdot\mathcal{G}_t(x), t>0$$
where $\prox_{tg}(x):=   \arg \min_{u \in \RR^n} \left\{  t g(u) + \frac{1}{2}\|u-x\|^2 \right \}$ and  $\mathcal{G}_t(x)=t^{-1}\left(x-\prox_{tg}(x-t\nabla f(x))\right)$ is defined as the proximal gradient.
By the equality $\prox_{tg}=(I+ t\partial g)^{-1}$, we have
$x-t\nabla f(x)\in x^++ t\partial g(x^+),$
which implies that there exists $s^+\in \partial g(x^+)$ such that
$$x^+=x-t(\nabla f(x)+s^+).$$

\subsection{Two important lemmas}
Our analysis will rely on the following two lemmas.
\begin{lemma}[Theorem 2.1.12, \cite{nesterov2013introductory}; Theorem 4, \cite{taylor2017smooth}]\label{Pro:1}
If $f\in \cS^{1,1}_{\mu,L}(\RR^n)$, then for any $x, y\in \RR^n$ we have
$$\mu\|x-y\|\leq \|\nabla f(x)-\nabla f(y)\|\leq L \|x-y\|,$$ 
and
$$\langle \nabla f(x)-\nabla f(y), x-y \rangle \geq \frac{\mu L}{\mu+ L}\|x-y\|^2+\frac{1}{\mu+L}\|\nabla f(x)-\nabla f(y)\|^2,$$
and the smooth strongly convex interpolation formula
$$f(x)\geq f(y)+\langle \nabla f(y), x-y\rangle+ \frac{1}{2L} \|\nabla f(x)-\nabla f(y)\|^2+\frac{\mu L}{2(L-\mu)}\|x-y-\frac{1}{L}(\nabla f(x)-\nabla f(y))\|^2.$$
\end{lemma}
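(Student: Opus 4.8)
The plan is to reduce everything to the convex, $(L-\mu)$-smooth function $\phi := f - \frac{\mu}{2}\|\cdot\|^2$, and to handle the three assertions in the order (1), (3), (2): the coercivity bound (2) will fall out of the interpolation formula (3) by a symmetrization trick, so the real work is the interpolation inequality.

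First I would dispose of the two-sided bound in (1). The upper estimate $\|\nabla f(x)-\nabla f(y)\|\le L\|x-y\|$ is nothing but the $L$-smoothness hypothesis. For the lower estimate, the convexity of $\phi$ gives monotonicity of its gradient, $\langle\nabla\phi(x)-\nabla\phi(y),x-y\rangle\ge0$, which unfolds to the strong-monotonicity inequality $\langle \nabla f(x)-\nabla f(y), x-y\rangle \ge \mu\|x-y\|^2$; combining this with Cauchy--Schwarz, $\|\nabla f(x)-\nabla f(y)\|\,\|x-y\|\ge \langle \nabla f(x)-\nabla f(y), x-y\rangle$, and dividing by $\|x-y\|$ yields $\mu\|x-y\|\le\|\nabla f(x)-\nabla f(y)\|$.

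The core is the interpolation formula (3). I would first establish the standard cocoercivity-type inequality for a convex $M$-smooth function $\phi$, namely $\phi(x)\ge \phi(y)+\langle\nabla\phi(y),x-y\rangle+\frac{1}{2M}\|\nabla\phi(x)-\nabla\phi(y)\|^2$ with $M=L-\mu$. I would prove this by freezing $y$ and introducing the auxiliary function $h(z):=\phi(z)-\langle\nabla\phi(y),z\rangle$, which is again convex and $M$-smooth and satisfies $\nabla h(y)=0$, so that $y$ globally minimizes $h$. Evaluating $h$ at the gradient-step point $x-\frac{1}{M}\nabla h(x)$, invoking the descent lemma $h(x-\frac{1}{M}\nabla h(x))\le h(x)-\frac{1}{2M}\|\nabla h(x)\|^2$, and comparing with $h(y)\le h(x-\frac{1}{M}\nabla h(x))$ gives the claimed inequality after unfolding the definition of $h$. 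Substituting $\phi=f-\frac{\mu}{2}\|\cdot\|^2$ and $\nabla\phi=\nabla f-\mu\,\mathrm{Id}$ into it and collecting the quadratic terms produces, after completing the square, exactly the right-hand side of (3).

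Finally, I would obtain the coercivity bound (2) from (3) by symmetrization: writing (3) once as stated and once with the roles of $x$ and $y$ interchanged and adding the two inequalities, the function values cancel, leaving an inequality purely in $\langle \nabla f(x)-\nabla f(y),x-y\rangle$, $\|x-y\|^2$, and $\|\nabla f(x)-\nabla f(y)\|^2$, which rearranges to (2). The main obstacle I anticipate is not any single conceptual step but the algebraic bookkeeping in the substitution: one must verify the completing-the-square identity turning $\frac{\mu}{2}\|x-y\|^2+\frac{1}{2(L-\mu)}\|\nabla f(x)-\nabla f(y)-\mu(x-y)\|^2$ into $\frac{1}{2L}\|\nabla f(x)-\nabla f(y)\|^2+\frac{\mu L}{2(L-\mu)}\|x-y-\frac{1}{L}(\nabla f(x)-\nabla f(y))\|^2$, which demands care with the cross terms but is otherwise routine.
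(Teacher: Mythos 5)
The paper does not actually prove this lemma: it is imported verbatim, with the first two inequalities credited to Nesterov's Theorem 2.1.12 and the interpolation formula to Taylor, Hendrickx and Glineur, so there is no in-paper proof to compare against. Your argument is correct and is essentially the classical derivation behind those citations: the reduction to $\phi=f-\frac{\mu}{2}\|\cdot\|^2$ is exactly how Nesterov obtains the coercivity inequality (he applies cocoercivity of $\nabla\phi$ directly rather than symmetrizing the interpolation formula, but the two routes differ by one line), and your auxiliary-function proof of $\phi(x)\ge\phi(y)+\langle\nabla\phi(y),x-y\rangle+\frac{1}{2M}\|\nabla\phi(x)-\nabla\phi(y)\|^2$ is the textbook one. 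The completing-the-square identity you flag does check out: with $u=x-y$ and $v=\nabla f(x)-\nabla f(y)$, both sides of $\frac{\mu}{2}\|u\|^2+\frac{1}{2(L-\mu)}\|v-\mu u\|^2=\frac{1}{2L}\|v\|^2+\frac{\mu L}{2(L-\mu)}\|u-\frac{1}{L}v\|^2$ have $\|u\|^2$-coefficient $\frac{\mu L}{2(L-\mu)}$, $\|v\|^2$-coefficient $\frac{1}{2(L-\mu)}$, and $\langle u,v\rangle$-coefficient $-\frac{\mu}{L-\mu}$, and the symmetrization indeed collapses to the stated bound with constants $\frac{\mu L}{\mu+L}$ and $\frac{1}{\mu+L}$. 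Two points you should make explicit rather than assert: (i) the $(L-\mu)$-smoothness of $\phi$ is not the triangle inequality (which only yields $L+\mu$); it holds because, for convex $f$, the $L$-Lipschitz-gradient property is equivalent to convexity of $\frac{L}{2}\|\cdot\|^2-f=\frac{L-\mu}{2}\|\cdot\|^2-\phi$, and a convex $\phi$ with that property has $(L-\mu)$-Lipschitz gradient; (ii) the whole argument, like the statement itself, requires $\mu<L$, the case $\mu=L$ being a degenerate limit.
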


\begin{lemma}[Theorem 3.5, \cite{drusvyatskiy2018error}]\label{Pro:2}
Let $\varphi =f +g $, where $f \in\cF^{1,1}_L(\RR^n)$ and $g\in\Gamma_0(\RR^n)$. For any $x\in \RR^n$, it holds that
$$\|\mathcal{G}_t(x)\|\leq d(0, \partial \varphi(x)).$$
\end{lemma}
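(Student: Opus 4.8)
The plan is to exploit the two defining properties of the proximal map $\prox_{tg}$: that it is the resolvent $(I+t\partial g)^{-1}$, and that it is nonexpansive. First I would rewrite the target quantity $d(0,\partial\varphi(x))$ in a workable form. Since $f$ is finite-valued and differentiable everywhere, the subdifferential sum rule gives $\partial\varphi(x)=\nabla f(x)+\partial g(x)$, so that
$$d(0,\partial\varphi(x))=\inf_{s\in\partial g(x)}\|\nabla f(x)+s\|.$$
It therefore suffices to prove $\|\mathcal{G}_t(x)\|\leq\|\nabla f(x)+s\|$ for every $s\in\partial g(x)$ and then pass to the infimum.

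The key observation is that $x$ is itself a proximal point. Indeed, fixing any $s\in\partial g(x)$, the optimality characterization $\prox_{tg}(z)=u\Leftrightarrow z-u\in t\partial g(u)$ shows that $x=\prox_{tg}(x+ts)$, because $ts\in t\partial g(x)$. On the other hand, by construction $x^+=\prox_{tg}(x-t\nabla f(x))$. Thus both $x$ and $x^+$ are images under $\prox_{tg}$, of the two input points $x+ts$ and $x-t\nabla f(x)$ respectively.

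With this setup the estimate falls out of the (firm) nonexpansiveness of $\prox_{tg}$, a standard property of proximal maps for $g\in\Gamma_0(\RR^n)$:
$$\|x-x^+\|=\|\prox_{tg}(x+ts)-\prox_{tg}(x-t\nabla f(x))\|\leq\|(x+ts)-(x-t\nabla f(x))\|=t\|\nabla f(x)+s\|.$$
Dividing by $t$ and recalling $\mathcal{G}_t(x)=t^{-1}(x-x^+)$ yields $\|\mathcal{G}_t(x)\|\leq\|\nabla f(x)+s\|$, and taking the infimum over $s\in\partial g(x)$ completes the argument.

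I do not anticipate a serious obstacle; the whole proof hinges on the single idea of writing $x$ as $\prox_{tg}(x+ts)$, which turns both iterates into prox images so that the $1$-Lipschitz property of $\prox_{tg}$ applies directly. The only points requiring care are the validity of the sum rule $\partial\varphi(x)=\nabla f(x)+\partial g(x)$ (guaranteed since $f$ is smooth and finite everywhere) and the degenerate case $\partial g(x)=\emptyset$, in which $d(0,\partial\varphi(x))=+\infty$ and the inequality holds trivially.
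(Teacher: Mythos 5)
Your proof is correct. Note that the paper does not prove this lemma at all---it imports it verbatim as Theorem 3.5 of \cite{drusvyatskiy2018error}---so there is no internal proof to compare against; your argument (writing $x=\prox_{tg}(x+ts)$ for each $s\in\partial g(x)$ and applying the nonexpansiveness of $\prox_{tg}$ to the two inputs $x+ts$ and $x-t\nabla f(x)$) is the standard one, and you correctly handle both the sum rule $\partial\varphi(x)=\nabla f(x)+\partial g(x)$ and the vacuous case $\partial g(x)=\emptyset$.
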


\section{Main result and implications}
In this section, we present two new results for the PG method: one is an exact worst-case linear convergence rate in terms of the proximal gradient norm, and the other is a refined sufficient decrease property of the objective function value.
\subsection{Main result}
Now, we are ready to present the main result of this note.
\begin{theorem}\label{addlem}
Let $\varphi =f +g $, where $f\in\cS^{1,1}_{\mu,L}(\RR^n)$ and $g\in\Gamma_0(\RR^n)$. Denote $\rho(t):=\max\{ |1-Lt|, |1-\mu t|\}$.  Then,  the PG method for minimizing $\varphi$ achieves the exact worst-case linear convergence rate in terms of the proximal gradient norm:
\begin{equation}\label{add10}
\|\mathcal{G}_t(x^+)\|\leq d(0, \partial \varphi(x^+))\leq\rho(t)\|\mathcal{G}_t(x)\|\leq \rho(t) d(0, \partial \varphi(x)).
\end{equation}
In particular, for $f\in\cF^{1,1}_{L}(\RR^n)$, $g\in\Gamma_0(\RR^n)$, and $0< t\leq \frac{2}{L}$, it holds that
$$\|\mathcal{G}_t(x^+)\|\leq d(0, \partial \varphi(x^+))\leq \|\mathcal{G}_t(x)\| \leq d(0, \partial \varphi(x)).$$
\end{theorem}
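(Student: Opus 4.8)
The plan is to reduce everything to the central inequality $d(0,\partial\varphi(x^+))\le\rho(t)\|\mathcal{G}_t(x)\|$, since the two flanking inequalities are immediate: $\|\mathcal{G}_t(x^+)\|\le d(0,\partial\varphi(x^+))$ and $\|\mathcal{G}_t(x)\|\le d(0,\partial\varphi(x))$ are exactly Lemma~\ref{Pro:2} applied at $x^+$ and at $x$. To attack the central inequality I would first exhibit an explicit element of $\partial\varphi(x^+)$. The prox characterization recalled in the preliminaries gives $s^+\in\partial g(x^+)$ with $x^+=x-t(\nabla f(x)+s^+)$, hence $\mathcal{G}_t(x)=\nabla f(x)+s^+$ and $\nabla f(x^+)+s^+\in\partial\varphi(x^+)$. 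By the definition of the distance,
\begin{equation*}
d(0,\partial\varphi(x^+))\le\|\nabla f(x^+)+s^+\|=\|\nabla f(x^+)-\nabla f(x)+\mathcal{G}_t(x)\|.
\end{equation*}

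The key step is to recognize the right-hand side as an increment of the gradient-step map $T:=I-t\nabla f$. Using $x-x^+=t\,\mathcal{G}_t(x)$, a one-line computation gives
\begin{equation*}
\nabla f(x^+)-\nabla f(x)+\mathcal{G}_t(x)=\tfrac{1}{t}\big(T(x)-T(x^+)\big),\qquad \mathcal{G}_t(x)=\tfrac{1}{t}(x-x^+),
\end{equation*}
so the central inequality is equivalent to the contraction estimate $\|T(x)-T(x^+)\|\le\rho(t)\,\|x-x^+\|$.

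I would then prove that $T$ is $\rho(t)$-Lipschitz on $\RR^n$. Expanding $\|T(x)-T(y)\|^2$ produces the cross term $-2t\langle\nabla f(x)-\nabla f(y),x-y\rangle$, which I would bound below using the strong-monotonicity inequality of Lemma~\ref{Pro:1}; after cancellation one is left with a combination of $\|x-y\|^2$ and $\|\nabla f(x)-\nabla f(y)\|^2$ in which the coefficient of the second term is $t\big(t-\tfrac{2}{\mu+L}\big)$. Replacing $\|\nabla f(x)-\nabla f(y)\|$ by its extreme admissible value, $\mu\|x-y\|$ or $L\|x-y\|$ (again Lemma~\ref{Pro:1}), according to the sign of that coefficient collapses the bound to $(1-\mu t)^2\|x-y\|^2$ when $t\le\tfrac{2}{\mu+L}$ and to $(1-Lt)^2\|x-y\|^2$ otherwise; the identity $(1-\mu t)^2-(1-Lt)^2=(L-\mu)t\,(2-(\mu+L)t)$ shows these are precisely the regimes where $\rho(t)$ equals $|1-\mu t|$ and $|1-Lt|$, so the factor is $\rho(t)^2$ in every case. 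The non-strongly-convex statement is the limit $\mu=0$, where $\rho(t)=\max\{|1-Lt|,1\}=1$ for $0<t\le\tfrac{2}{L}$: the same computation applies verbatim with the co-coercivity inequality $\langle\nabla f(x)-\nabla f(y),x-y\rangle\ge\tfrac{1}{L}\|\nabla f(x)-\nabla f(y)\|^2$ in place of strong monotonicity, showing $T$ is nonexpansive. Exactness of the rate would be certified by the scalar instances $g\equiv0$ with $f=\tfrac{\mu}{2}\|\cdot\|^2$ and $f=\tfrac{L}{2}\|\cdot\|^2$, for which $\mathcal{G}_t(x^+)=(1-\mu t)\mathcal{G}_t(x)$ and $\mathcal{G}_t(x^+)=(1-Lt)\mathcal{G}_t(x)$, attaining equality in the appropriate regime.

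The main obstacle is the reformulation in the second step: seeing that the natural subgradient $\nabla f(x^+)+s^+$ is exactly $t^{-1}$ times the increment of $T$ between $x$ and $x^+$. Once this is spotted the problem reduces to the classical contraction bound for a gradient step on a smooth strongly convex function, and the only remaining care is the case split on the sign of $t-\tfrac{2}{\mu+L}$ needed to match the two branches of $\rho(t)$.
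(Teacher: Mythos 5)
Your proposal is correct and follows essentially the same route as the paper: both reduce to bounding $\|\nabla f(x^+)+s^+\|$, expand the square, apply the strong-monotonicity inequality of Lemma~\ref{Pro:1}, and split on the sign of $t-\tfrac{2}{\mu+L}$ to recover $\rho(t)^2$. Your repackaging of the computation as the $\rho(t)$-Lipschitz continuity of $T=I-t\nabla f$ is only a cosmetic reframing of the paper's inline estimate, and your scalar quadratic examples give a direct certificate of exactness where the paper instead argues by consistency with the rate of Taylor et al.
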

\begin{proof}
Note that $s^+ \in \partial g(x^+)$ and hence $d(0,\partial \varphi(x^+)) \le \| \nabla f(x^+) + s^+\|$. Therefore, to show \eqref{add10}, it suffices to show that $\|\nabla f(x^+)+s^+\|^2\leq \rho^2(t)\|\mathcal{G}_t(x)\|^2$ in view of Lemma \ref{Pro:2}.  Using Lemma \ref{Pro:1}, we derive that
\begin{align*}
&\|\nabla f(x^+)+s^+\|^2 \\
= & \|\nabla f(x)+s^+ +\nabla f(x^+)-\nabla f(x)\|^2  \\
= &\|\nabla f(x)+s^+\|^2+2\langle \nabla f(x)+s^+, \nabla f(x^+)-\nabla f(x) \rangle +  \|\nabla f(x^+)-\nabla f(x)\|^2\\
= &\frac{1}{t^2}\|x^+-x\|^2-\frac{2}{t} \langle x^+-x, \nabla f(x^+)-\nabla f(x) \rangle +  \|\nabla f(x^+)-\nabla f(x)\|^2\\
\leq &\frac{1}{t^2}\|x^+-x\|^2-\frac{2}{t}\left(\frac{\mu L}{\mu+ L}\|x^+-x\|^2+\frac{1}{\mu+L}\|\nabla f(x^+)-\nabla f(x)\|^2\right) +  \|\nabla f(x^+)-\nabla f(x)\|^2\\
=& \frac{1}{t^2}\left[(1-\frac{2t\mu L}{\mu+ L})\|x^+-x\|^2 + t(t- \frac{2}{\mu+L})\|\nabla f(x^+)-\nabla f(x)\|^2 \right]\\
 \leq &\frac{1}{t^2}\left[(1-\frac{2t\mu L}{\mu+ L})\|x^+-x\|^2 + t\max\{L^2(t- \frac{2}{\mu+L}), \mu^2(t- \frac{2}{\mu+L})\}\|x^+-x \|^2 \right]\\
=&\frac{1}{t^2}\max\{1-\frac{2t\mu L}{\mu+ L} + tL^2(t- \frac{2}{\mu+L}), 1-\frac{2t\mu L}{\mu+ L}  +  t\mu^2(t- \frac{2}{\mu+L}) \}\|x^+-x \|^2 \\
=&\frac{1}{t^2} \max\{ (1-Lt)^2, (1-\mu t)^2\}\|x^+-x \|^2=\rho^2(t)\|\mathcal{G}_t(x)\|^2.
\end{align*}
\end{proof}
Here, the factor $\rho(t)$ can not be improved; otherwise, it will contradict the following exact worst-case convergence rate, which was recently established in \cite{taylor2018exact}:
$$\|\nabla f(x^+)+s^+\|^2\leq \rho^2(t)\|\nabla f(x)+s\|^2, ~~\forall s\in \partial g(x).$$

\subsection{Implicated result}
The second result is a refined  version of the classic descent lemma(see \cite[Corollary 2.2.1]{nesterov2013introductory}\cite[Lemma 2.3]{beck2009fast}).

\begin{lemma}\label{addlem1}
Let $\varphi = f + g $, where $f  \in \cS^{1,1}_{\mu,L}(\RR^n)$ and $g\in\Gamma_0(\RR^n)$. Then,  the PG method for minimizing $\varphi$ has the refined sufficient decrease property
\begin{equation}\label{add20}
\varphi(x)\geq \varphi(x^+) +\frac{t}{2}\|\mathcal{G}_t(x)\|^2 + \frac{t}{2(1-\mu t)}\|\mathcal{G}_t(x^+)\|^2, 0< t\leq \frac{1}{L}.
\end{equation}
In particular, 
\begin{itemize}
\item for $f \in \cF^{1,1}_{L}(\RR^n)$, $g \in \Gamma_0(\RR^n)$, it holds that
\begin{equation}\label{add21}
\varphi(x)\geq \varphi(x^+) +\frac{t}{2}\|\mathcal{G}_t(x)\|^2 + \frac{t}{2}\|\mathcal{G}_t(x^+)\|^2, 0< t\leq \frac{1}{L}.
\end{equation}
\item  for $f \in \cF^{1,1}_{L}(\RR^n)$, $g \equiv 0$, it holds that
\begin{equation}\label{add22}
f(x) \geq f(x^+) +\frac{t}{2}\|\nabla f(x)\|^2 + \frac{t}{2}\|\nabla f(x^+)\|^2, 0< t \leq \frac{1}{L}.
\end{equation}
\end{itemize}
\end{lemma}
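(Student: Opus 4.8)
The plan is to lower-bound the objective decrease $\varphi(x)-\varphi(x^+)$ by splitting it as $[f(x)-f(x^+)]+[g(x)-g(x^+)]$, estimating each piece with the sharpest available convexity information, and then reducing the whole claim to a single quadratic inequality in two vectors. First I would exploit the subgradient $s^+\in\partial g(x^+)$ produced by the prox step, for which $\mathcal{G}_t(x)=\nabla f(x)+s^+$ and $x-x^+=t\,\mathcal{G}_t(x)$. Convexity of $g$ gives $g(x)-g(x^+)\geq\langle s^+,x-x^+\rangle$, while for the smooth part I would apply the strongly convex interpolation formula (the third inequality of Lemma \ref{Pro:1}) to the ordered pair $(x,x^+)$, yielding
$$f(x)-f(x^+)\geq\langle\nabla f(x^+),x-x^+\rangle+\tfrac{1}{2L}\|\nabla f(x)-\nabla f(x^+)\|^2+\tfrac{\mu L}{2(L-\mu)}\bigl\|x-x^+-\tfrac1L(\nabla f(x)-\nabla f(x^+))\bigr\|^2.$$
Adding the two estimates and substituting $x-x^+=t\,\mathcal{G}_t(x)$ collapses the first-order terms into $t\langle\nabla f(x^+)+s^+,\mathcal{G}_t(x)\rangle$.

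Next I would introduce the shorthand $G:=\mathcal{G}_t(x)=\nabla f(x)+s^+$ and $H:=\nabla f(x^+)+s^+$, so that $\nabla f(x)-\nabla f(x^+)=G-H$ and the entire right-hand side becomes a quadratic form in $(G,H)$. The term $\frac{t}{2(1-\mu t)}\|\mathcal{G}_t(x^+)\|^2$ on the target right-hand side is handled by invoking the main Theorem \ref{addlem}, which gives $\|\mathcal{G}_t(x^+)\|\leq\|H\|$; since $0<t\leq\frac1L\leq\frac1\mu$ forces $1-\mu t>0$, its coefficient is nonnegative and $\|\mathcal{G}_t(x^+)\|^2$ may safely be replaced by the upper bound $\|H\|^2$. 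This reduces \eqref{add20} to the purely algebraic claim
$$\alpha\|G\|^2+\gamma\langle G,H\rangle+\beta\|H\|^2\geq0,$$
where $\alpha=\frac1{2L}+\frac{\mu L}{2(L-\mu)}(t-\frac1L)^2-\frac t2$, the coefficient $\beta=\frac1{2L}+\frac{\mu}{2L(L-\mu)}-\frac{t}{2(1-\mu t)}$, and $\gamma$ collects the cross terms.

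The main obstacle, and the only nonroutine part, is the simplification of these coefficients. I expect the computation to collapse to $\alpha=\frac{(1-\mu t)(1-Lt)}{2(L-\mu)}$, $\beta=\frac{1-Lt}{2(L-\mu)(1-\mu t)}$, and $\gamma=-\frac{1-Lt}{L-\mu}$, all of which carry the correct sign for $0<t\leq\frac1L$, and, crucially, to satisfy $\alpha\beta=\gamma^2/4$ identically. Because the discriminant of the quadratic form vanishes, it is a perfect square,
$$\alpha\|G\|^2+\gamma\langle G,H\rangle+\beta\|H\|^2=\bigl\|\sqrt\alpha\,G+\tfrac{\gamma}{2\sqrt\alpha}H\bigr\|^2\geq0,$$
with the degenerate endpoint $t=\frac1L$ giving $0\geq0$; this proves \eqref{add20} and simultaneously shows the estimate is tight.

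Finally, the two specializations should be immediate. Taking $\mu=0$ degenerates the interpolation formula to its convex version (the term $\frac{\mu L}{2(L-\mu)}$ vanishing) and makes $\frac{t}{2(1-\mu t)}=\frac t2$, which yields \eqref{add21}; taking in addition $g\equiv0$, so that $s^+=0$, $\prox_{tg}=I$, and $\mathcal{G}_t=\nabla f$, yields \eqref{add22}. I anticipate the sign bookkeeping of $1-Lt$ and $1-\mu t$, together with the identity $\alpha\beta=\gamma^2/4$, to be the delicate points, while everything else is bookkeeping around the prox identity and Lemma \ref{Pro:1}.
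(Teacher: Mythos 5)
Your proof is correct; I checked the coefficient computations and they do come out as you predict: $\alpha=\frac{(1-\mu t)(1-Lt)}{2(L-\mu)}$, $\beta=\frac{1-Lt}{2(L-\mu)(1-\mu t)}$, $\gamma=-\frac{1-Lt}{L-\mu}$, so that indeed $\alpha,\beta\geq 0$ and $\alpha\beta=\gamma^2/4$ for $0<t\leq 1/L$, and the residual quadratic form in $(G,H)$ is positive semidefinite (with the degenerate case $t=1/L$ handled separately as you note, since then $\alpha=\beta=\gamma=0$ and one cannot divide by $\sqrt{\alpha}$). The skeleton is the same as the paper's: interpolation formula of Lemma \ref{Pro:1} for $f$ at the pair $(x,x^+)$, plain convexity for $g$, the prox identity $x-x^+=t(\nabla f(x)+s^+)$, and finally $\|\mathcal{G}_t(x^+)\|\leq d(0,\partial\varphi(x^+))\leq\|\nabla f(x^+)+s^+\|$ via Lemma \ref{Pro:2}. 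The one genuine difference is how the algebra is organized: the paper first observes that $0<t\leq 1/L$ gives $\cS^{1,1}_{\mu,L}\subset\cS^{1,1}_{\mu,t^{-1}}$ and applies the interpolation formula with $L$ replaced by $t^{-1}$, whereupon the cross terms collapse \emph{exactly} into $\frac{1}{2t}\|x-x^+\|^2+\frac{t}{2(1-\mu t)}\|\nabla f(x^+)+s^+\|^2$ with no leftover and no discriminant check; you instead keep the true constant $L$, which yields a sharper intermediate inequality but forces you to verify the identity $\alpha\beta=\gamma^2/4$ and the signs of $\alpha,\beta$ by hand before discarding the extra perfect square. Both routes are valid; the paper's substitution trick buys a shorter computation, while yours makes visible exactly what is being thrown away for $t<1/L$ (the square $\|\sqrt{\alpha}\,G+\frac{\gamma}{2\sqrt{\alpha}}H\|^2$) and why the estimate becomes tight at $t=1/L$.
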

\begin{proof}
Note that $0<t \leq L^{-1}$ implies $t^{-1}\geq L$ and the fact that $\cS^{1,1}_{\mu,L}(\RR^n) \subset\cS^{1,1}_{\mu, t^{-1}}(\RR^n)$. We can use the smooth strongly convex interpolation formula with $L=t^{-1}$ and $y=x^+$ in Lemma \ref{Pro:1} to get
$$f(x)\geq f(x^+)+\langle \nabla f(x^+), x-x^+\rangle+ \frac{t}{2} \|\nabla f(x)-\nabla f(x^+)\|^2+\frac{\mu }{2(1-\mu t)}\|x-x^+-t(\nabla f(x)-\nabla f(x^+))\|^2.$$
The convexity of $g$ gives $g(x)\geq g(x^+)+\langle s^+, x-x^+\rangle$ since $s^+ \in \partial g(x^+)$. Adding these two inequalities, we derive that
\begin{align*}
\varphi(x)\geq  & \varphi(x^+)+\langle \nabla f(x^+)+s^+, x-x^+\rangle+ \frac{t}{2} \|\nabla f(x)-\nabla f(x^+)\|^2\\
&+\frac{\mu }{2(1-\mu t)}\|x-x^+-t(\nabla f(x)-\nabla f(x^+))\|^2\\
= &\varphi(x^+)+\langle \nabla f(x)+s^+, x-x^+\rangle - \langle \nabla f(x^+)- \nabla f(x), x^+ -x\rangle \\
&+ \frac{t}{2} \|\nabla f(x)-\nabla f(x^+)\|^2 +\frac{\mu }{2(1-\mu t)}\|x-x^+-t(\nabla f(x)-\nabla f(x^+))\|^2
\end{align*}
Using the expression $x^+=x-t(\nabla f(x)+s^+)$, we can further derive that
\begin{align*}
\varphi(x) \geq  &  \varphi(x^+)+\frac{1}{t}\|x-x^+\|^2 - \langle \nabla f(x^+)- \nabla f(x), x^+ -x\rangle \\
&+ \frac{t}{2} \|\nabla f(x)-\nabla f(x^+)\|^2 +\frac{\mu t^2}{2(1-\mu t)}\|s^+ + \nabla f(x^+)\|^2\\
=& \varphi(x^+) + \frac{1}{2t}\|t(\nabla f(x^+)- \nabla f(x))-x^+ +x\|^2\\
&+\frac{1}{2t}\|x-x^+\|^2 +\frac{\mu t^2}{2(1-\mu t)}\|s^+ + \nabla f(x^+)\|^2\\
=& \varphi(x^+)+\frac{1}{2t}\|x-x^+\|^2  +\frac{t}{2(1-\mu t)}\|s^+ + \nabla f(x^+)\|^2.
\end{align*}
Note that $x-x^+=t \mathcal{G}_t(x)$ and
$$\|s^+ + \nabla f(x^+)\|\geq d(0, \partial \varphi(x^+))\geq \|\mathcal{G}_t(x^+)\|.$$
We finally obtain
$$\varphi(x)\geq \varphi(x^+) +\frac{t}{2}\|\mathcal{G}_t(x)\|^2 + \frac{t}{2(1-\mu t)}\|\mathcal{G}_t(x^+)\|^2.$$
This completes the proof.
\end{proof}

\begin{remark}
In  \cite[Corollary 2.2.1]{nesterov2013introductory}, for $\varphi = f+ g$ with $f \in \cS^{1,1}_{\mu,L}(\RR^n)$ and $g$ being the indicator function of a set $Q$, the descent lemma of the projected gradient method can be stated as
\begin{equation}\label{compare}
\varphi(x) \geq \varphi(x^+) +\frac{t}{2}\|g_Q(x,t)\|^2, 0 < t \le \frac{1}{L}.
\end{equation}
where $g_Q(x, t) := t^{-1}(x-x^+)$ is the gradient mapping of $f$ on $Q$.

In \cite[Lemma 2.3]{beck2009fast}, for $\varphi = f+g$ with $f  \in \cF^{1,1}_{L}(\RR^n)$ and $g \in \Gamma_0(\RR^n)$, the corresponding descent lemma of the PG method is:
\begin{equation}\label{compare}
\varphi(x) \geq \varphi(x^+) +\frac{L}{2}\|x^+ - x\|^2.
\end{equation}
Remarkably, our result improves these existing descent  lemmas.
\end{remark}

With the refined descent lemma, we can show a better linear convergence rate in terms of the objective function accuracy for the gradient descent method under the classic Polyak-{\L}ojasiewicz inequality \cite{polyak1963gradient}\cite{lojasiewicz1963topological}. 

\begin{corollary}
Let $f \in\cF^{1,1}_{L}(\RR^n)$, $g \equiv 0$.  Assume that $f$ satisfies the Polyak-{\L}ojasiewicz inequality for some $\eta > 0$:
\begin{align*}
\forall x \in \text{dom}~ f, \frac{1}{2}\|\nabla f(x)\|^2 \geq \eta (f(x) -\min f).
\end{align*}
Let $x^+ = x - t \nabla f(x)$, $0< t \leq \frac{1}{L}$, then it holds that
\begin{align}\label{add23}
f(x^+) -\min f  \le  \frac{1-\eta t}{1+ \eta t} (f(x) - \min f).
\end{align}
\end{corollary}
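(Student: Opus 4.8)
The plan is to combine the refined descent lemma \eqref{add22} with the Polyak-{\L}ojasiewicz (PL) inequality, the decisive feature being that the extra gradient-norm term at the \emph{new} iterate in \eqref{add22} allows us to invoke the PL inequality twice --- once at $x$ and once at $x^+$. Throughout, write $f^{\ast} := \min f$ and abbreviate $a := f(x) - f^{\ast} \geq 0$ and $b := f(x^+) - f^{\ast} \geq 0$, so that the claim \eqref{add23} is simply $b \leq \tfrac{1-\eta t}{1+\eta t}\,a$.

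First I would apply \eqref{add22}, which is legitimate since $g \equiv 0$, $f \in \cF^{1,1}_{L}(\RR^n)$, and $0 < t \leq L^{-1}$; it gives
\begin{equation*}
a - b = f(x) - f(x^+) \geq \frac{t}{2}\|\nabla f(x)\|^2 + \frac{t}{2}\|\nabla f(x^+)\|^2.
\end{equation*}
Next I would lower-bound each of the two gradient-norm terms by the PL inequality, evaluated at $x$ and at $x^+$ respectively:
\begin{equation*}
\frac{t}{2}\|\nabla f(x)\|^2 \geq \eta t\,(f(x) - f^{\ast}) = \eta t\, a, \qquad \frac{t}{2}\|\nabla f(x^+)\|^2 \geq \eta t\,(f(x^+) - f^{\ast}) = \eta t\, b.
\end{equation*}
Substituting these into the preceding display yields $a - b \geq \eta t\,(a + b)$.

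Finally, collecting the $a$- and $b$-terms gives $a\,(1 - \eta t) \geq b\,(1 + \eta t)$, and dividing through by the positive quantity $1 + \eta t$ delivers exactly $b \leq \tfrac{1-\eta t}{1+\eta t}\,a$, which is \eqref{add23}.

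As for the difficulty: once \eqref{add22} is available there is essentially no analytic obstacle, since all the genuine work has been absorbed into proving the refined descent lemma. The one conceptual point worth emphasizing is that the second use of PL \emph{at the new point} $x^+$ is precisely what the classical descent lemma cannot support, and it is the source of the improvement from the textbook factor $1 - \eta t$ to $\tfrac{1-\eta t}{1+\eta t}$. A minor sanity check is that this factor is nonnegative: combining a single gradient step ($t \leq L^{-1}$) with $f(x^+) \geq f^{\ast}$ forces $\tfrac{1}{2}\|\nabla f(x)\|^2 \leq L\,(f(x)-f^{\ast})$, whence $\eta \leq L$ and $\eta t \leq 1$. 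This last fact is not needed for the inequality itself --- we only ever divide by the strictly positive $1 + \eta t$ --- but it confirms that the bound is a genuine contraction with ratio in $[0,1)$.
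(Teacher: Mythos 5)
Your proof is correct and follows essentially the same route as the paper: apply the refined descent inequality \eqref{add22}, invoke the PL inequality at both $x$ and $x^+$, and rearrange to obtain the contraction factor $\frac{1-\eta t}{1+\eta t}$. The additional sanity check that $\eta t \le 1$ is a nice touch but, as you note, not needed for the argument.
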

\begin{proof}
Using the Polyak-{\L}ojasiewicz inequality and  (\ref{add22}) in Lemma \ref{addlem1}, we have
\begin{align*}
f(x) &\ge f(x^+) +\frac{t}{2}\|\nabla f(x)\|^2 + \frac{t}{2}\|\nabla f(x^+)\|^2 \\ 
&\ge f(x^+)  + \eta t( f(x) -\min f) +  \eta t( f(x^+) - \min f) .
\end{align*}
Rearranging and subtracting $ \min f$ from both sides yield
$$  f(x^+) -\min f  \le  \frac{1-\eta t}{1+ \eta t} (f(x) - \min f) .$$
\end{proof}
\begin{remark}
The result (\ref{add23}) with $t = \frac{1}{L}$ improves the existing linear convergence rate in \cite[Theorem 1]{karimi2016linear} from $(1-\frac{\eta}{L})$ to $\frac{L-\eta}{L+ \eta}$.
\end{remark}

Finally, we extend the result above to the PG method.
\begin{corollary}
Let $f \in\cF^{1,1}_{L}(\RR^n)$, $g\in\Gamma_0(\RR^n)$. Assume that $\varphi = f + g$ satisfies the generalized Polyak-{\L}ojasiewicz inequality for some $\eta > 0$:
\begin{align*}
\forall x \in \text{dom}~ \varphi , \frac{1}{2}\|\mathcal{G}_t(x)\|^2 \geq \eta (\varphi(x) -\min \varphi).
\end{align*}
Let $x^+ = x - t \mathcal{G}_t(x)$, $0< t \leq \frac{1}{L}$, then it holds that
\begin{align}
\varphi(x^+) -\min \varphi  \le  \frac{1-\eta t}{1+ \eta t} (\varphi(x) - \min \varphi).
\end{align}
\end{corollary}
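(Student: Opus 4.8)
The plan is to mirror the proof of the preceding corollary almost verbatim, replacing the gradient $\nabla f$ by the proximal gradient $\mathcal{G}_t$ and the smooth descent inequality \eqref{add22} by its composite counterpart \eqref{add21}. The whole argument reduces to combining a single descent inequality with the generalized Polyak-{\L}ojasiewicz inequality evaluated at two consecutive iterates, followed by an elementary rearrangement.

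First I would invoke the refined descent lemma \eqref{add21} of Lemma \ref{addlem1}, which applies exactly under the present hypotheses $f \in \cF^{1,1}_L(\RR^n)$, $g \in \Gamma_0(\RR^n)$ and $0 < t \le \frac{1}{L}$, to obtain
\begin{equation*}
\varphi(x) \geq \varphi(x^+) + \frac{t}{2}\|\mathcal{G}_t(x)\|^2 + \frac{t}{2}\|\mathcal{G}_t(x^+)\|^2.
\end{equation*}
Next I would apply the generalized Polyak-{\L}ojasiewicz inequality twice, once at $x$ and once at $x^+$, to lower-bound the two proximal-gradient-norm terms by $\eta t (\varphi(x) - \min\varphi)$ and $\eta t (\varphi(x^+) - \min\varphi)$ respectively. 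Substituting these bounds yields
\begin{equation*}
\varphi(x) \geq \varphi(x^+) + \eta t (\varphi(x) - \min\varphi) + \eta t (\varphi(x^+) - \min\varphi).
\end{equation*}
Finally, subtracting $\min\varphi$ from both sides and collecting the terms $\varphi(x) - \min\varphi$ and $\varphi(x^+) - \min\varphi$ gives $(1 - \eta t)(\varphi(x) - \min\varphi) \geq (1 + \eta t)(\varphi(x^+) - \min\varphi)$; dividing by the positive quantity $1 + \eta t$ then produces the claimed contraction factor $\frac{1-\eta t}{1+\eta t}$.

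The argument involves no genuine obstacle, but I would take care over the one point where the composite case differs from the purely smooth case: the Polyak-{\L}ojasiewicz inequality is assumed to hold for all $x \in \dom\varphi$, so to apply it at the successor $x^+$ I must first verify that $x^+ \in \dom\varphi$. This is automatic, since $x^+ = \prox_{tg}(x - t\nabla f(x))$ always lands in $\dom g$, and because $f \in \cF^{1,1}_L(\RR^n)$ is finite on all of $\RR^n$ we have $\dom\varphi = \dom g$; hence $x^+ \in \dom\varphi$ and the second application of the inequality is legitimate. The only structural difference from the $g \equiv 0$ corollary is thus the use of \eqref{add21} in place of \eqref{add22}, which already accounts for the nonsmooth term through the proximal gradient.
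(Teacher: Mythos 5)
Your proposal is correct and follows essentially the same route as the paper's own proof: apply the refined descent inequality \eqref{add21}, bound both proximal-gradient terms via the generalized Polyak--{\L}ojasiewicz inequality, and rearrange. The additional check that $x^+ \in \dom\varphi$ is a sensible (and valid) precaution that the paper leaves implicit.
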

\begin{proof}
Using the generalized Polyak-{\L}ojasiewicz inequality and  (\ref{add21}) in Lemma \ref{addlem1}, we have
\begin{align*}
\varphi(x) &\ge \varphi(x^+) +\frac{t}{2}\|\mathcal{G}_t(x)\|^2 + \frac{t}{2}\|\mathcal{G}_t(x^+)\|^2 \\ 
&\ge \varphi(x^+)  + \eta t( \varphi(x) -\min \varphi) +  \eta t( \varphi(x^+) - \min \varphi) .
\end{align*}
Rearranging and subtracting $ \min \varphi$ from both sides give us 
$$  \varphi(x^+) -\min \varphi  \le  \frac{1-\eta t}{1+ \eta t} (\varphi(x) - \min \varphi) .$$
\end{proof}



\section*{Acknowledgements}
This work is supported by the National Science Foundation of China (No.61571008).


\end{document}